\documentclass[11pt]{amsart}

\usepackage{amsmath,amsthm,amssymb,amsfonts}
\usepackage{enumitem}
\usepackage{aliascnt}
\usepackage[colorlinks=true,linkcolor=blue,citecolor=blue,urlcolor=blue]{hyperref}
\usepackage[nameinlink,noabbrev]{cleveref}

\newtheorem{theorem}{Theorem}[section]

\newaliascnt{lemma}{theorem}
\newtheorem{lemma}[lemma]{Lemma}
\aliascntresetthe{lemma}

\newaliascnt{corollary}{theorem}
\newtheorem{corollary}[corollary]{Corollary}
\aliascntresetthe{corollary}

\newaliascnt{proposition}{theorem}
\newtheorem{proposition}[proposition]{Proposition}
\aliascntresetthe{proposition}

\theoremstyle{definition}

\newaliascnt{definition}{theorem}
\newtheorem{definition}[definition]{Definition}
\aliascntresetthe{definition}

\newaliascnt{remark}{theorem}
\newtheorem{remark}[remark]{Remark}
\aliascntresetthe{remark}

\newaliascnt{example}{theorem}

\aliascntresetthe{example}

\crefname{theorem}{Theorem}{Theorems}
\Crefname{theorem}{Theorem}{Theorems}
\crefname{lemma}{Lemma}{Lemmas}
\Crefname{lemma}{Lemma}{Lemmas}
\crefname{proposition}{Proposition}{Propositions}
\Crefname{proposition}{Proposition}{Propositions}
\crefname{corollary}{Corollary}{Corollaries}
\Crefname{corollary}{Corollary}{Corollaries}
\crefname{definition}{Definition}{Definitions}
\Crefname{definition}{Definition}{Definitions}
\crefname{remark}{Remark}{Remarks}
\Crefname{remark}{Remark}{Remarks}
\crefname{example}{Example}{Examples}
\Crefname{example}{Example}{Examples}
\crefname{equation}{equation}{equations}
\Crefname{equation}{Equation}{Equations}

\DeclareMathOperator{\rank}{rank}
\DeclareMathOperator{\Roots}{Roots}
\DeclareMathOperator{\Harm}{Harm}

\newcommand{\R}{\mathbb R}
\newcommand{\Z}{\mathbb Z}
\newcommand{\cL}{\mathcal L}
\newcommand{\cC}{\mathcal C}
\newcommand{\ip}[2]{\langle #1,#2\rangle}
\newcommand{\norm}[1]{\lVert #1\rVert}
\newcommand{\Sk}{\mathcal S_k}
\newcommand{\Xk}{X_k}

\setlist[enumerate,1]{label=\textup{(\roman*)}, ref=\textup{(\roman*)}}
\setlist[enumerate,2]{label=\textup{(\alph*)}, ref=\textup{(\alph*)}}

\newlist{theoremcases}{enumerate}{1}
\setlist[theoremcases]{
  label=\textup{(\roman*)},
  ref=\textup{(\roman*)},
  leftmargin=*,
  itemsep=.3em
}

\crefname{enumi}{part}{parts}
\Crefname{enumi}{Part}{Parts}
\crefname{theoremcasesi}{part}{parts}
\Crefname{theoremcasesi}{Part}{Parts}

\title[Equality in a Reverse Minkowski Shell Bound]{Equality in a Reverse
Minkowski Shell Bound for Integral Lattices via Spherical Designs}

\author[Scott Duke Kominers]{Scott Duke Kominers}

\address{Harvard Business School; Department of Economics and Center of
Mathematical Sciences and Applications, Harvard University; and a16z crypto}

\email{kominers@fas.harvard.edu}

\thanks{I used LLMs to assist with computations, analysis, and synthesis in
the preparation of this article, particularly GPT-5.5 Pro and Claude 4.7 Opus
(accessed in part via Poe with the support of Quora, where I am an advisor).
I particularly appreciate a thorough review from Refine.ink. The problem,
methods, and eventual written form are my own; and of course any errors remain
my responsibility. This work was conducted while I was visiting the
Technological Innovation, Entrepreneurship, and Strategic Management (TIES)
Group at the MIT Sloan School of Management; I greatly appreciate their
hospitality.}

\subjclass[2020]{11H06, 05B30, 52C17, 05E30}
\keywords{Integral lattices, spherical designs, Delsarte--Goethals--Seidel
bounds, tight designs, root systems, \(E_8\)}

\begin{document}

\begin{abstract}
For a full-rank integral lattice \(\cL\subset\R^n\), Regev and
Stephens-Davidowitz proved that
\[
  N_{=k}(\cL):=|\{y\in\cL:\norm{y}^2=k\}|
  \le 2\binom{n+2k-2}{2k-1}.
\]
We classify the equality cases.  For \(n\ge2\), equality holds if and only if
either \(k=1\) and \(\cL\cong\Z^n\), or \(n=8\), \(k=2\), and
\(\cL\cong E_8\).  For \(n=1\), equality holds exactly when \(\cL\) represents
\(k\).

The proof shows that equality is rigid.  Saturation of the shell bound forces
the normalized norm-\(k\) shell to be an antipodal tight spherical
\((4k-1)\)-design.  The associated Delsarte--Goethals--Seidel annihilator
polynomial gives an arithmetic root condition, which isolates \(E_8\) at
\(k=2\), rules out \(k=3\), and combines with the
Bannai--Damerell/Bannai theorem and an elementary circle
argument to exclude all remaining cases in dimension at least \(2\).
\end{abstract}

\maketitle

\section{Introduction}

Minkowski's convex body theorem gives a fundamental lower-bound principle in
the geometry of numbers: a sufficiently large centrally symmetric convex body
must contain a nonzero point of a lattice.  ``Reverse Minkowski'' questions ask
for complementary upper bounds on how many lattice points of prescribed length,
or in prescribed regions, can occur under arithmetic or stability hypotheses.

Let \(\cL\subset \R^n\) be an integral lattice, meaning that
\[
  \ip{x}{y}\in \Z \qquad \text{for all }x,y\in \cL.
\]
For a positive integer \(k\), write
\[
  \Sk(\cL):=\{y\in \cL:\norm{y}^2=k\},
  \qquad
  N_{=k}(\cL):=|\Sk(\cL)|.
\]
Regev and Stephens-Davidowitz \cite{RSD2026} proved that
\begin{equation}\label{eq:RSDbound}
  N_{=k}(\cL)\le 2\binom{n+2k-2}{2k-1}.
\end{equation}
Their proof applies the antipodal, or line-system, form of the
Delsarte--Goethals--Seidel absolute bound to the normalized shell
\[
  \Xk(\cL):=\frac{1}{\sqrt{k}}\Sk(\cL)\subset S^{n-1}.
\]
Indeed, if \(y,z\in\Sk(\cL)\), then
\[
  \left\langle \frac{y}{\sqrt{k}},\frac{z}{\sqrt{k}}\right\rangle
  =
  \frac{\ip{y}{z}}{k}\in \frac1k\Z.
\]
Thus the set of distinct inner products, including the antipodal value
\(-1\), has size at most \(2k\).  The antipodal
Delsarte--Goethals--Seidel absolute bound,
\[
    |X|\le 2\binom{n+s-2}{s-1}
\]
for an antipodal set \(X\) with \(s=|A(X)|\) distinct inner products, then
yields \eqref{eq:RSDbound} on substituting \(s\le 2k\) and using monotonicity
of \(2\binom{n+m-1}{m}\) in \(m\).

In this paper, we determine when \eqref{eq:RSDbound} is sharp.
The answer is that equality is extremely rare.

\begin{theorem}[Main classification]\label{thm:main}
Let \(\cL\subset \R^n\) be a full-rank integral lattice, \(n\ge 1\), and let
\(k\ge 1\) be an integer.  Equality holds in \eqref{eq:RSDbound}, i.e.,
\[
  N_{=k}(\cL)= 2\binom{n+2k-2}{2k-1},
\]
if and only if one of the following mutually exclusive cases occurs:
\begin{theoremcases}
\item\label{main:rank-one}
\(n=1\), say \(\cL=a\Z\) with \(a>0\) and \(a^2\in\Z\), and there exists
\(m\in\Z\) with \(a^2m^2=k\); equivalently, the one-dimensional lattice
\(\cL\) represents \(k\).
\item\label{main:k-one}
\(n\ge 2\), \(k=1\), and \(\cL\) is isometric to \(\Z^n\).
\item\label{main:e-eight}
\(n=8\), \(k=2\), and \(\cL\) is isometric to the \(E_8\) root lattice.
\end{theoremcases}
\end{theorem}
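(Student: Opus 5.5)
The plan is to dispose of the easy directions and the rank-one case first, and then treat \(n\ge2\) through the rigidity of tight designs.

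\emph{Rank one and the converse direction.} For \(n=1\) the bound is \(2\binom{2k-1}{2k-1}=2\). The shell \(\Sk(a\Z)\) is \(\{\pm am\}\) when \(a^2m^2=k\) for some \(m\), and it is empty otherwise, so case \ref{main:rank-one} follows. For the converse in higher rank:
\begin{itemize}
\item when \(k=1\) the bound is \(2\binom{n}{1}=2n\), and \(\Z^n\) has exactly \(2n\) unit vectors;
\item when \(n=8\) and \(k=2\) the bound is \(2\binom{10}{3}=240\), which is the number of roots of \(E_8\).
\end{itemize}

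\emph{Forward direction, \(n\ge2\).} Suppose equality holds. I would revisit the Regev--Stephens-Davidowitz argument and check that each inequality in it is tight.

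First, monotonicity of \(2\binom{n+m-1}{m}\) is strict for \(n\ge2\). Hence the antipodal set \(\Xk(\cL)\) must realize all \(2k\) inner products \(\{-1\}\cup\{j/k: |j|<k\}\); no value may be missing.

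Second, equality in the antipodal absolute bound with \(s=2k\) forces \(X=\Xk(\cL)\) to be an antipodal tight spherical \((2s-1)=(4k-1)\)-design. This is the Delsarte--Goethals--Seidel equality theorem, which I would quote. Tightness also gives the annihilator characterization: the inner products other than \(\pm1\) are exactly the roots of the Gegenbauer-type polynomial \(R_{s-1}\) of degree \(2k-2\) in \(t\) (an adjacent Jacobi polynomial, even in \(t\)). So the roots of this explicit polynomial must be precisely \(\{j/k: 0<|j|<k\}\). In particular they are rational with denominator \(k\), and they are equally spaced.

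\emph{Case analysis on \(k\).}
\begin{itemize}
\item \(k=1\). Here \(X\) is an antipodal tight 3-design of size \(2n\) whose inner products lie in \(\{0,\pm1\}\). It is therefore an orthonormal frame \(\{\pm e_i\}\). Thus \(\cL\) contains an orthonormal basis spanning a sublattice \(\Z^n\). Since \(\cL\) is integral, every \(y\in\cL\) has \(\ip{y}{e_i}\in\Z\), so \(y\in\Z^n\) and \(\cL=\Z^n\).
\item \(k=2\). The annihilator has roots \(\pm1/2\) and \(0\). Writing out the degree-2 polynomial in \(t^2\) and requiring its roots to be \(0\) and \(1/4\) pins down \(n=8\). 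A tight 7-design in dimension 8 is the \(E_8\) root system (Bannai--Sloane uniqueness). The integral lattice generated by these 240 vectors is \(E_8\). Since \(E_8\) is unimodular and \(\cL\supseteq E_8\) is integral, we get \(\cL=E_8\).
\item \(k=3\). The required roots are \(\pm1/3\), \(\pm2/3\), and \(0\). I would compare coefficients of the explicit annihilator: the elementary symmetric functions of the squared roots must equal \(1/9+4/9\) and \(4/81\). These yield two equations in \(n\) with no common integer solution \(n\ge2\).
\item \(k\ge4\). By Bannai--Damerell and Bannai, tight \(t\)-designs with \(t\ge8\) exist on \(S^{n-1}\) only when \(n=2\) (here \(t=4k-1\ge15\)). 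For \(n=2\) the tight design is a regular \(4k\)-gon. Its inner products are \(\cos(\pi j/2k)\), and these must all be rational with denominator dividing \(k\). By Niven's theorem this fails once \(4k>6\); this is the elementary circle argument. The same circle argument also covers \(n=2\) for \(k=2,3\) where needed.
\end{itemize}

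\emph{Expected obstacle.} The main difficulty is the explicit root condition: computing the adjacent Jacobi annihilator for general \(n\) cleanly enough to extract \(n=8\) at \(k=2\) and a contradiction at \(k=3\). A secondary point is checking that the tight-design equality theorem applies in the antipodal, line-system form, with degree \(4k-1\).
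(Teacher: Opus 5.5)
Your plan is correct, and its skeleton matches the paper's:
\begin{itemize}
\item strict monotonicity forces all $2k$ inner products to occur;
\item DGS equality gives an antipodal tight $(4k-1)$-design with annihilator $(1+u)\cC_{2k-1}^{(n)}(u)$;
\item $k=2$ forces $n=8$;
\item $k=3$ fails on the symmetric functions of the squared roots: the sum forces $n=10$, after which the product is $5/96\ne 4/81$;
\item $k\ge4$ with $n\ge3$ is excluded by Bannai--Damerell/Bannai, and $n=2$ by the regular $4k$-gon;
\item unimodularity of $E_8$ upgrades $\cL\supseteq E_8$ to $\cL=E_8$.
\end{itemize}

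You differ from the paper in two sub-steps.
\begin{itemize}
\item To identify the $k=2$ shell, you invoke Bannai--Sloane uniqueness of the tight $7$-design in $S^7$. The paper instead uses the lattice directly. Since $\norm{\alpha}^2=2$ and $\ip{\alpha}{\beta}\in\{0,\pm1,\pm2\}$, integrality makes $\Sk(\cL)$ closed under $s_\alpha(\beta)=\beta-\ip{\beta}{\alpha}\alpha$. So $\Sk(\cL)$ is a reduced simply-laced root system of rank $8$ with $240$ roots, hence $E_8$ by the ADE root counts. Your route is shorter but rests on a deep uniqueness theorem for spherical codes. The paper's route needs only the standard root-system classification; it explicitly avoids classifying tight $7$-designs.
\item For the circle, you use Niven's theorem, while the paper uses the estimate $\cos(\pi/2k)\in((k-1)/k,1)$. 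Both work; the paper's is fully elementary.
\end{itemize}

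Two statements in your proposal need correcting, though neither affects the argument.
\begin{itemize}
\item Bannai--Damerell/Bannai does not say that tight $t$-designs with $t\ge 8$ exist only for $n=2$. The Leech minimal vectors give a tight $11$-design in $S^{23}$. The correct statement is that for $n\ge3$ and $t\ge4$ one has $t\in\{4,5,7,11\}$, which still excludes $t=4k-1\ge15$.
\item The annihilator $\cC_{2k-1}^{(n)}$ has degree $2k-1$ and is odd, with $0$ (the value $j=0$) among its roots. The even degree-$(2k-2)$ polynomial you describe is $\cC_{2k-1}^{(n)}(u)/u$, whose roots are the nonzero inner products $j/k$ with $0<|j|<k$.
\end{itemize}
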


Thus, in rank at least \(2\), the only sharp shells are the coordinate
minimal shell of the cubic lattice and the root shell of \(E_8\).  The
rank-\(1\) family appears because in dimension \(1\) the right-hand side of
\eqref{eq:RSDbound} is always \(2\).

The main conceptual observation is that equality in \eqref{eq:RSDbound} is not
merely a numerical endpoint---it forces a strong geometric structure.  The
Delsarte--Goethals--Seidel equality theorem implies that, for \(n\ge2\),
a sharp normalized shell is an antipodal tight spherical \((4k-1)\)-design.
This strengthening is crucial: For instance, when \(k=2\), equality forces
a tight spherical \(7\)-design, not merely a spherical \(3\)-design; the
latter condition alone would be far too weak to isolate \(E_8\).

The equality theorem also determines the annihilator polynomial of the
inner-product set.  In the present arithmetic setting this yields a simple
integrality filter: the roots of a certain universal Gegenbauer sum must be
exactly
\[
  0,\quad \pm\frac1k,\quad \pm\frac2k,\quad \ldots,\quad \pm\frac{k-1}{k}.
\]
For \(k=2\), the filter forces \(n=8\); for \(k=3\), it gives an immediate
contradiction; and for \(k\ge4\), equality would produce a tight spherical
design of strength at least \(15\), which is ruled out in dimensions at least
\(3\) by the Bannai--Damerell and Bannai nonexistence theorem.  The remaining
circle case is elementary.

The paper is organized as follows.  \Cref{sec:prelim}
records the needed facts about lattice shells, Delsarte--Goethals--Seidel
bounds, and tight spherical designs.  \Cref{sec:equality-dgs} proves that
sharpness of \eqref{eq:RSDbound} forces a tight \((4k-1)\)-design and derives
the annihilator identity.  \Cref{sec:integrality-filter} extracts the
integrality filter and performs the \(k=2\) and \(k=3\) calculations.
\Cref{sec:classification-proof} proves \cref{thm:main}.
\Cref{sec:verification} checks the explicit equality examples and gives
comparison examples involving the circle and the Leech lattice.
\Cref{sec:remarks} records some concluding comments and related questions.

\section{Preliminaries}\label{sec:prelim}

\subsection{Integral lattices and shells}

Throughout, a lattice \(\cL\subset\R^n\) is assumed to have full rank unless
explicitly stated otherwise.  It is called \emph{integral} if
\[
  \ip{x}{y}\in\Z
  \qquad
  \text{for all $x,y\in\cL$}.
\]
For \(k\in\Z_{>0}\), its \emph{norm-\(k\) shell} is
\[
  \Sk(\cL)=\{y\in\cL:\norm{y}^2=k\}.
\]
The \emph{normalized shell} is
\[
  \Xk(\cL)=\frac{1}{\sqrt{k}}\Sk(\cL)\subset S^{n-1},
\]
provided \(\Sk(\cL)\neq\varnothing\).

Every normalized lattice shell is antipodal: if \(y\in\Sk(\cL)\), then
\(-y\in\Sk(\cL)\).

\begin{definition}[Antipodal set]
A finite set \(X\subset S^{n-1}\) is \emph{antipodal} if \(X=-X\), or
equivalently if \(X\) is a union of antipodal pairs \(\{x,-x\}\).
\end{definition}

\subsection{Inner-product sets}

For a finite set \(X\subset S^{n-1}\), define its \emph{inner-product set} by
\[
  A(X):=\{\ip{x}{y}:x,y\in X,\ x\ne y\}.
\]
When \(X\) is antipodal, the set \(A(X)\cup\{1\}\) is invariant under
multiplication by \(-1\).

For a normalized lattice shell \(X=\Xk(\cL)\), if
\(x=y/\sqrt{k}\) and \(x'=z/\sqrt{k}\) are distinct elements of \(X\), then
\[
  \ip{x}{x'}=\frac{\ip{y}{z}}{k}\in\frac1k\Z.
\]
By Cauchy--Schwarz, \(\ip{y}{z}=k\) only when \(y=z\), and
\(\ip{y}{z}=-k\) only when \(y=-z\).  Hence
\begin{equation}\label{eq:shell-inner-products}
  A(\Xk(\cL))
  \subseteq
  \{-1\}\cup
  \left\{\frac{j}{k}: -k<j<k,\ j\in \Z\right\};
\end{equation}
including the antipodal value \(-1\), the set has at most \(2k\) elements, and
the non-antipodal part has at most \(2k-1\) elements.

\subsection{Spherical designs and Gegenbauer sums}

We use the standard definition of a spherical design.

\begin{definition}[Spherical design]
A finite nonempty set \(X\subset S^{n-1}\) is a \emph{spherical \(t\)-design}
if
\[
  \frac1{|X|}\sum_{x\in X} f(x)
  =
  \int_{S^{n-1}} f(u)\,d\sigma(u)
\]
for every polynomial \(f\) on \(\R^n\) of total degree at most \(t\), where
\(\sigma\) is normalized rotation-invariant measure on \(S^{n-1}\).

The \emph{strength} of \(X\) is the largest integer \(t\) for which \(X\) is
a spherical \(t\)-design.
\end{definition}

Equivalently, \(X\) is a spherical \(t\)-design if
\[
  \sum_{x\in X} H(x)=0
\]
for every nonconstant homogeneous harmonic polynomial \(H\) of degree at most
\(t\).

Delsarte, Goethals, and Seidel \cite{DGS1977} proved a Fisher-type lower
bound for spherical designs: if \(X\subset S^{n-1}\) is a spherical
\(t\)-design, then
\begin{equation}\label{eq:DGS-lower}
  |X|\ge
  \begin{cases}
    \displaystyle
    \binom{n+e-1}{e}+\binom{n+e-2}{e-1}
      & t=2e,\\[1.2em]
    \displaystyle
    2\binom{n+e-1}{e}
      & t=2e+1.
  \end{cases}
\end{equation}
A design attaining equality in \eqref{eq:DGS-lower} is called \emph{tight}.

We use the normalized Gegenbauer polynomials \(Q_i^{(n)}(u)\) from
Delsarte--Goethals--Seidel, which are characterized by
\begin{gather*}
  Q_0^{(n)}(u)=1,\qquad Q_1^{(n)}(u)=nu,\\
  Q_i^{(n)}(1)=\dim \Harm_i(\R^n)
  =
  \binom{n+i-1}{i}-\binom{n+i-3}{i-2}.
\end{gather*}
For \(m\ge0\), define
\begin{equation}\label{eq:Cdef}
  \cC_m^{(n)}(u):=
  Q_m^{(n)}(u)+Q_{m-2}^{(n)}(u)+Q_{m-4}^{(n)}(u)+\cdots,
\end{equation}
where the sum stops at \(Q_0^{(n)}\) or \(Q_1^{(n)}\) according to the parity
of \(m\).\footnote{The calligraphic notation \(\cC_m^{(n)}\) is reserved for
these cumulative DGS sums and is distinct from the classical Gegenbauer family
\(C_m^{\lambda}\).} Then
\begin{equation}\label{eq:Cvalue}
  \cC_m^{(n)}(1)=\binom{n+m-1}{m}.
\end{equation}

We use the following standard antipodal, or \emph{line-system}, form of the
Delsarte--Goethals--Seidel absolute bound.  It is the absolute-bound part and
equality case of \cite[Theorem 6.8]{DGS1977}, equivalently the line-system
bound of \cite{DGS1975,Koornwinder1976}.

\begin{theorem}[Delsarte--Goethals--Seidel]\label{thm:dgs-equality}
Let \(X\subset S^{n-1}\), \(n\ge2\), be a finite antipodal set, and put
\[
  A(X)=\{\langle x,y\rangle:x,y\in X,\ x\ne y\},\qquad s=|A(X)|.
\]
Then we have
\[
  |X|\le 2\,\cC_{s-1}^{(n)}(1)
  =
  2\binom{n+s-2}{s-1}.
\]
If equality holds, then \(X\) is a tight spherical \((2s-1)\)-design.
Moreover, if
\[
  F_X(u):=\prod_{\alpha\in A(X)}\frac{u-\alpha}{1-\alpha},
\]
then we have
\[
  |X|F_X(u)=(1+u)\cC_{s-1}^{(n)}(u).
\]
\end{theorem}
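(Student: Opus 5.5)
The bound comes from a polynomial-space (linear independence) argument on one half of \(X\), and the equality statements come from comparing the resulting basis with the reproducing kernel of that space. Since \(X=-X\), the set \(A(X)\cup\{1\}\) is symmetric under \(u\mapsto-u\). Hence \(A(X)\) contains \(-1\), and \(A(X)\setminus\{-1\}\) is symmetric. The annihilator \(F_X\) therefore factors as \(F_X(u)=(1+u)G(u)\), where \(G\) has degree \(s-1\), has roots symmetric about \(0\), and so has the parity of \(s-1\). It also satisfies \(G(1)=F_X(1)/2=\tfrac12\) and \(G(\alpha)=0\) for \(\alpha\in A(X)\setminus\{-1\}\).

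Choose \(Y\subset X\) containing exactly one point of each antipodal pair, so \(|X|=2|Y|\). For \(y\in Y\), set \(f_y(u):=G(\ip{y}{u})\). Because \(G\) has pure parity, each monomial \(\ip{y}{u}^{s-1-2j}\) can be multiplied by \(\norm{u}^{2j}\). This realizes \(f_y\) on \(S^{n-1}\) as an element of \(\mathrm{Hom}_{s-1}(\R^n)\), the homogeneous polynomials of degree \(s-1\). Restricted to the sphere, this space has dimension \(\binom{n+s-2}{s-1}=\cC_{s-1}^{(n)}(1)\). For \(x,y\in Y\) we have \(f_y(x)=\tfrac12\delta_{xy}\): the case \(x=y\) is \(G(1)=\tfrac12\), and the case \(x\ne y\) holds because \(\ip{x}{y}\in A(X)\setminus\{-1\}\). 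So the \(f_y\) are linearly independent, giving \(|X|=2|Y|\le 2\binom{n+s-2}{s-1}\).

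For the equality case, suppose \(|Y|=\dim\mathrm{Hom}_{s-1}|_{S^{n-1}}\). Then \(\{f_y\}\) is a basis, and evaluation at \(Y\) is, up to the factor \(2\), its dual basis. Hence every \(h\) in the space satisfies the interpolation formula \(h=2\sum_{y\in Y}h(y)f_y\). The reproducing kernel of \(\mathrm{Hom}_{s-1}|_{S^{n-1}}\) in \(L^2(\sigma)\) is \(K(u,v)=\cC_{s-1}^{(n)}(\ip{u}{v})\), since it is the sum of the zonal kernels \(Q_{s-1},Q_{s-3},\dots\).
\begin{enumerate}
\item Apply the interpolation formula to \(h=K(\cdot,v)\) and compare with \(f_x(u)=\int K(u,w)f_x(w)\,d\sigma(w)\). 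From this, extract the Gram relation \(\int f_xf_y\,d\sigma=c\,\delta_{xy}\).
\item Use the Gram relation to show that the identity \(G(\ip{x}{u})=\lambda\,\cC_{s-1}^{(n)}(\ip{x}{u})\) holds on \(Y\). Both sides are zonal polynomials in the same space, so it holds identically, with \(\lambda\) fixed by the value at \(u=x\).
\item Comparing values at \(1\), using \(G(1)=\tfrac12\) and \eqref{eq:Cvalue}, gives \(\lambda=1/|X|\). Hence \(|X|F_X(u)=(1+u)\cC_{s-1}^{(n)}(u)\).
\end{enumerate}

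For the design property, expand \(|X|F_X=(1+u)\cC_{s-1}^{(n)}=\sum_{i\le s}c_iQ_i^{(n)}\) in Gegenbauer polynomials. By the three-term recurrence, every \(c_i\) is nonnegative and \(c_0=1\). Then sum over \(x,y\in X\). The left side is \(|X|\sum_{x,y}F_X(\ip{x}{y})=|X|^2\). The right side is \(|X|^2+\sum_{i\ge1}c_i\sum_{x,y}Q_i(\ip{x}{y})\), and the positive-definiteness of the \(Q_i\) makes each inner sum nonnegative. So \(\sum_{x,y}Q_i(\ip{x}{y})=0\) for every \(i\) with \(1\le i\le s\) and \(c_i>0\), i.e.\ \(X\) annihilates \(\Harm_i\) for those \(i\). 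Antipodality handles all odd degrees for free, which raises the strength to \(2s-1\); all even degrees \(\le 2s-2\) must also be covered. Tightness then follows because \(|X|=2\binom{n+s-2}{s-1}\) is exactly the bound \eqref{eq:DGS-lower} for \(t=2s-1\).

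I expect the equality analysis to be the main obstacle. The bound itself is routine, but two points are delicate. First, one must extract the exact identity \(G=\cC_{s-1}^{(n)}/|X|\) from the kernel comparison in steps (i)–(ii). Second, one must verify positivity of the \(c_i\) and confirm that the vanishing for \(i\le s\), together with antipodality, really gives all even degrees up to \(2s-2\). If the Gegenbauer bookkeeping fails there, I would fall back on citing \cite[Theorem 6.8]{DGS1977} for that step.
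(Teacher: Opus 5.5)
The paper does not prove this statement; it quotes it from \cite[Theorem 6.8]{DGS1977}, so your argument has to stand on its own. Your proof of the inequality is the standard linear-independence argument and is correct. The equality part, however, has a genuine gap at step (i).

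Write \(M_{xy}=\int f_xf_y\,d\sigma\). The comparison you describe yields \(f_x(v)=2\sum_{y\in Y}M_{xy}K(y,v)\), hence \(M\,[K(y,z)]_{y,z\in Y}=\tfrac14 I\). This identity holds for every basis of \(V=\mathrm{Hom}_{s-1}|_{S^{n-1}}\) that is dual to evaluation at \(Y\), so it cannot by itself force \(M\) to be diagonal. In fact \(M\) is diagonal exactly when \(\cC_{s-1}^{(n)}\) vanishes on \(A(X)\setminus\{-1\}\), and that is already equivalent to the identity \(|X|G=\cC_{s-1}^{(n)}\) you want.

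The missing idea is a trace identity that uses zonality. Expand \(G=\sum_k g_kQ_k^{(n)}\) over \(k\equiv s-1\pmod 2\), \(k\le s-1\). Then the orthogonal projection \(P_k\) onto \(\Harm_k\) sends \(f_x\) to \(g_kQ_k^{(n)}(\ip{x}{\cdot})\). Since \(\{2K(\cdot,x)\}_{x\in Y}\) is the \(L^2(\sigma)\)-dual basis of \(\{f_x\}_{x\in Y}\), you get
\[
\dim\Harm_k=\operatorname{tr}P_k=2\sum_{x\in Y}(P_kf_x)(x)=2|Y|\,g_k\,Q_k^{(n)}(1)=|X|\,g_k\dim\Harm_k .
\]
So \(g_k=1/|X|\) for every such \(k\), i.e.\ \(|X|\,G=\cC_{s-1}^{(n)}\). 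This gives the annihilator identity directly and makes steps (i)--(ii) unnecessary.

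The design step has a second gap, the one you flagged. Your pair sum with \((1+u)\cC_{s-1}^{(n)}=\sum_ic_iQ_i^{(n)}\) can only give vanishing of harmonic sums in degrees \(i\le s\), because that polynomial has degree \(s\). Antipodality adds the odd degrees, but no bookkeeping of the \(c_i\) reaches the even degrees \(s<i\le 2s-2\). For \(s=4\), the \(E_8\) case, degree \(6\) is left uncovered.

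Use the identity just proved instead. The relation \(G(\ip{x}{y})=\tfrac12\delta_{xy}\) on \(Y\) becomes \(K(x,y)=N\delta_{xy}\) with \(N=\binom{n+s-2}{s-1}=|Y|\). Thus \(\{N^{-1/2}K(\cdot,y)\}_{y\in Y}\) is an orthonormal basis of \(V\). Hence, for \(p,q\in\mathrm{Hom}_{s-1}\),
\[
\int pq\,d\sigma=\frac1N\sum_{y\in Y}p(y)q(y)=\frac1{|X|}\sum_{x\in X}p(x)q(x),
\]
where the last step uses that \(pq\) is even. Such products span \(\mathrm{Hom}_{2s-2}\), whose restriction to \(S^{n-1}\) contains every polynomial of even degree at most \(2s-2\). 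Together with antipodality for the odd degrees, this gives the \((2s-1)\)-design property without any positivity of the \(c_i\). Tightness then follows from the size count, as you said.
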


\begin{remark}
In the application to the norm-\(k\) shell of an integral lattice,
\[
  A(\Xk(\cL))\subseteq
  \{-1\}\cup
  \left\{\frac{j}{k}:-k<j<k,\ j\in\Z\right\},
\]
which is a set of size \(2k\).  Setting \(s=|A(\Xk(\cL))|\le 2k\) and using
the fact that \(\cC_m^{(n)}(1)=\binom{n+m-1}{m}\) is increasing in \(m\),
\cref{thm:dgs-equality} gives
\[
  |\Xk(\cL)|\le 2\cC_{s-1}^{(n)}(1)\le 2\cC_{2k-1}^{(n)}(1)
  =
  2\binom{n+2k-2}{2k-1},
\]
which is exactly \eqref{eq:RSDbound}.
\end{remark}

\subsection{Classification input for tight spherical designs}

We use the following nonexistence theorem for certain tight spherical designs,
due to Bannai--Damerell \cite{BD1979,BD1980} and Bannai \cite{Bannai1979},
building on the Delsarte--Goethals--Seidel work \cite{DGS1977}.

\begin{theorem}[Bannai--Damerell and Bannai]\label{thm:BD}
Let \(n\ge3\).  If a tight spherical \(t\)-design exists in \(S^{n-1}\) and
\(t\ge4\), then
\[
  t\in\{4,5,7,11\}.
\]
Equivalently, in Euclidean dimension at least \(3\), no tight spherical
\(t\)-designs exist for \(t\ge4\) except possibly in strengths
\(4\), \(5\), \(7,\) and \(11\).
\end{theorem}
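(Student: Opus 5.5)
Since \cref{thm:BD} is an external classification result, the plan is to reconstruct the Bannai--Damerell strategy rather than find a new argument. Let \(X\subset S^{n-1}\), \(n\ge3\), be a tight spherical \(t\)-design with \(t=2e\) or \(t=2e+1\). First I would invoke the Delsarte--Goethals--Seidel structure theory. Tightness forces \(X\) to have exactly \(s=e\) distinct inner products \(\ne1\) when \(t=2e\), and \(s=e+1\) (including \(-1\)) with \(X\) antipodal when \(t=2e+1\). It also forces the inner products to be exactly the roots of an explicit annihilator. In the odd case this is the identity of \cref{thm:dgs-equality}, \(|X|F_X(u)=(1+u)\cC_{e}^{(n)}(u)\). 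In the even case the analogous polynomial is \(Q_e^{(n)}+Q_{e-1}^{(n)}+\cdots\), a Jacobi polynomial. Moreover \(X\) with its inner-product classes carries a \(Q\)-polynomial association scheme. The odd case reduces to the even case, with the same difficulty: passing to the derived design on a hyperplane section, or quotienting by antipodality, yields a tight \(2e\)-design in one dimension lower, or a related object.

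The second step is arithmetic. Because the eigenmatrices of the association scheme have rational multiplicities and algebraic-integer eigenvalues, I would show that the inner products \(\alpha\in A(X)\) are rational, apart from a small symmetric exceptional situation, typically when \(\alpha^2\) rather than \(\alpha\) is forced rational. More precisely, I would show that suitable normalizations of them, such as \(1/\alpha^2\) in the antipodal case, are integers. This converts the problem into a Diophantine condition: all roots of a specific Jacobi polynomial \(P_e^{(a,b)}\), with parameters depending on \(n\), must be rational of a constrained shape. Consequently the coefficients of the monic polynomial, which are elementary symmetric functions of the roots and are explicit rational functions of \(n\), must take integral or near-integral values.

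The main obstacle is the third step: showing that this condition fails for all \(e\ge6\) (strength \(\ge12\)) and for the isolated small strengths \(6,8,9,10\). For large \(e\) I would follow Bannai--Damerell and compare two pieces of information. One is the explicit values of the first two power sums of the roots, \(\sum\alpha\) and \(\sum\alpha^2\), from the three-term recurrence. The other is the constraint that the roots are distinct rationals with bounded denominators, all lying in \((-1,1)\). The known asymptotic distribution of Jacobi zeros, which cluster near \(\pm1\) with arcsine-type density, is incompatible with such rigidity once \(e\) is large relative to fixed parameters. The large-\(n\) regime needs separate handling: there the roots approach the scaled Hermite zeros \(\sim h_j/\sqrt n\), whose irrationality rules out rational roots. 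What remains is a finite list of pairs \((e,n)\), which I would dispose of by direct computation of the discriminant or rational-root test for each. This last finite check, together with uniformity in \(n\), is where I expect most of the work and where I would lean on the cited papers \cite{BD1979,BD1980,Bannai1979}.
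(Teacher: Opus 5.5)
The paper does not prove \cref{thm:BD}. It quotes the result from Bannai--Damerell and Bannai, and uses it only for strengths $4k-1\ge 15$. Your first two steps are in line with the standard approach: the DGS annihilator, the $Q$-polynomial scheme, and Galois action on eigenspaces. Two corrections there. First, the operative fact is that the multiplicities $\dim\Harm_i$ are pairwise \emph{distinct} for $n\ge3$, not merely rational. Irrational inner products, as in the icosahedron (a tight $5$-design in $S^2$ with inner products $\pm1/\sqrt5$), can occur only when the extra eigenspace of the antipodal scheme has the same multiplicity as some $\Harm_i$. Second, your odd-to-even reduction is not valid. The $E_8$ roots form a tight $7$-design in $S^7$, but their derived sets have $56$ and $126$ points, whereas a tight $6$-design in $S^6$ has $112$.

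The genuine gap is Step~3, which is where all the content of the cited theorem lies. Asymptotic information cannot exclude rationality. Since $\mathbb{Q}$ is dense, clustering of zeros near $\pm1$, or convergence of $\sqrt n\,\alpha_j$ to irrational Hermite zeros, is fully compatible with every $\alpha_j$ being rational. Moreover, the Gegenbauer parameter $n/2$ grows with $n$, so ``large $e$ relative to fixed parameters'' gives no uniformity. The discreteness you want to use is also not available for abstract tight designs. Integrality of the scheme gives conditions such as $k_\alpha\alpha\in\Z$, where the valency $k_\alpha$ grows with $n$; this does not bound denominators. Already for $t=5$ and $t=7$, rationality gives inner products $\pm1/m$ (besides $-1$ and $0$), with $n=m^2-2$ and $n=3m^2-4$ respectively, and $m$ is unbounded a priori. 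So nothing reduces you to finitely many pairs $(e,n)$. What is needed is an exact arithmetic obstruction extracted from the explicit coefficients. For example, products or discriminants of the roots are explicit rational functions of $n$ that must be rational squares, and one must show these Diophantine conditions fail. Your proposal defers exactly this part to the citations. Bounded denominators are a feature of the lattice application, where the denominator is $k$. There your power-sum idea genuinely works and even makes \cref{thm:BD} unnecessary. Since $\cC_{2k-1}^{(n)}$ is the classical Gegenbauer polynomial $C_{2k-1}^{n/2}$, compare the first two elementary symmetric functions of its squared nonzero roots with those of $\{j^2/k^2\}_{1\le j<k}$. This forces $n=2k+4$ and then $(3k-1)(5k+1)=15k^2$, which is impossible for $k\ge3$.
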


\begin{remark}
The proof of \cref{thm:main} does not require a classification of all tight
spherical \(7\)-designs.  The integrality filter in
\cref{sec:integrality-filter} directly forces the \(k=2\) equality case to
have \(n=8\), after which the lattice shell is recognized as the \(E_8\) root
system.
\end{remark}

\section{Sharpness Forces Design Rigidity}\label{sec:equality-dgs}

The bound \eqref{eq:RSDbound} arises from \cref{thm:dgs-equality} applied to
the antipodal normalized shell, together with a monotonicity step.  Equality
in \eqref{eq:RSDbound} therefore forces both saturation of the DGS bound and
exhaustion of the allowed inner-product set, which together yield the
design and annihilator structure recorded in the proposition below.

\begin{proposition}\label{prop:equality-consequences}
Let \(n\ge2\), let \(\cL\subset\R^n\) be an integral lattice, and let
\(k\ge1\).  Suppose equality holds in \eqref{eq:RSDbound}.  Put
\[
  X=\Xk(\cL).
\]
Then:
\begin{enumerate}
\item We have\label{eqcons:full-inner-products}
\[
  A(X)=
  \{-1\}\cup
  \left\{\frac{j}{k}:-k<j<k,\ j\in\Z\right\}.
\]
\item\label{eqcons:tight-design}
\(X\) is an antipodal tight spherical \((4k-1)\)-design.
\item\label{eqcons:annihilator}
With
\[
  F_X(u)=\prod_{\alpha\in A(X)}\frac{u-\alpha}{1-\alpha},
\]
we have
\begin{equation}\label{eq:lattice-annihilator}
  2\binom{n+2k-2}{2k-1}F_X(u)
  =
  (1+u)\cC_{2k-1}^{(n)}(u).
\end{equation}
\item\label{eqcons:root-filter}
The roots of \(\cC_{2k-1}^{(n)}\) are exactly
\begin{equation}\label{eq:root-filter-general}
  \left\{0,\pm\frac1k,\pm\frac2k,\ldots,\pm\frac{k-1}{k}\right\}.
\end{equation}
\end{enumerate}
\end{proposition}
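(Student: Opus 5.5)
The plan is to run the chain of inequalities in the Remark following \cref{thm:dgs-equality} and note that equality at the end forces equality at every step. Put \(X=\Xk(\cL)\) and \(s=|A(X)|\). Equality in \eqref{eq:RSDbound} makes \(\Sk(\cL)\) nonempty, so \(X\) is a finite antipodal subset of \(S^{n-1}\). The Remark then gives
\[
  2\binom{n+2k-2}{2k-1}=|X|\le 2\cC_{s-1}^{(n)}(1)\le 2\cC_{2k-1}^{(n)}(1)=2\binom{n+2k-2}{2k-1},
\]
so both inequalities are equalities.

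For the second inequality, I would use that \(m\mapsto\binom{n+m-1}{m}\) is strictly increasing when \(n\ge2\). This holds because \(\binom{n+m}{m+1}=\binom{n+m-1}{m}\cdot\frac{n+m}{m+1}\) and \(\frac{n+m}{m+1}>1\). Hence \(s-1=2k-1\), that is, \(s=2k\). By \eqref{eq:shell-inner-products}, \(A(X)\) lies in a set of exactly \(2k\) elements, so \(A(X)\) equals that set, which proves (i). The first inequality is then an equality in \cref{thm:dgs-equality} with \(s=2k\). That theorem says \(X\) is a tight spherical \((2s-1)=(4k-1)\)-design, which is (ii), and it also gives the annihilator identity \(|X|F_X(u)=(1+u)\cC_{2k-1}^{(n)}(u)\). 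Substituting \(|X|=2\binom{n+2k-2}{2k-1}\) yields \eqref{eq:lattice-annihilator}, which is (iii).

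For (iv), I would compare the roots of the two sides of \eqref{eq:lattice-annihilator}. By (i), \(F_X\) is a nonzero constant times
\[
  (u+1)\prod_{-k<j<k}\Bigl(u-\tfrac jk\Bigr),
\]
a polynomial of degree \(2k\) with simple roots \(-1\) and \(j/k\) for \(-k<j<k\). The right side is \((1+u)\cC_{2k-1}^{(n)}(u)\). Here \(\cC_{2k-1}^{(n)}\) has degree exactly \(2k-1\), since \(Q_{2k-1}^{(n)}\) has degree \(2k-1\) and the lower terms have smaller degree. Cancelling the factor \((1+u)\) in \(\R[u]\) shows that \(\cC_{2k-1}^{(n)}(u)\) is a nonzero constant multiple of \(\prod_{-k<j<k}(u-j/k)\). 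Its roots are therefore exactly \(0,\pm1/k,\dots,\pm(k-1)/k\), each simple, which is \eqref{eq:root-filter-general}.

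There is no serious obstacle here; the argument is bookkeeping once \cref{thm:dgs-equality} is granted. The only points needing care are the following.
\begin{itemize}
\item Monotonicity must be strict in order to conclude \(s=2k\). This is where \(n\ge2\) is used, since for \(n=1\) the binomial coefficients are all \(1\).
\item The degree of \(\cC_{2k-1}^{(n)}\) must be exactly \(2k-1\), so that root-matching identifies the full root set and not merely a subset of it.
\end{itemize}
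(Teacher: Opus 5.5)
Your proposal is correct and follows essentially the same route as the paper. You saturate the chain \(|X|\le 2\cC_{s-1}^{(n)}(1)\le 2\cC_{2k-1}^{(n)}(1)\), use strict monotonicity (where \(n\ge2\) enters) to force \(s=2k\) and hence the full inner-product set, invoke the DGS equality case for (ii) and (iii), and compare roots for (iv). Your treatment of (iv), cancelling \((1+u)\) in \(\R[u]\), is a slightly cleaner form of the paper's simple-root argument, and it makes your separate check on the degree of \(\cC_{2k-1}^{(n)}\) automatic.
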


\begin{proof}
By integrality,
\[
  A(X)\subseteq A_k:=
  \{-1\}\cup\left\{\frac{j}{k}:-k<j<k,\ j\in\Z\right\},
\]
a set of size \(2k\).  Set \(s:=|A(X)|\), so \(s\le 2k\).  Since \(X\) is
antipodal, \cref{thm:dgs-equality} applies and gives
\begin{equation}\label{eq:DGS-applied-with-s}
  |X|\le 2\cC_{s-1}^{(n)}(1)
  =
  2\binom{n+s-2}{s-1}.
\end{equation}
By \eqref{eq:Cvalue}, \(\cC_m^{(n)}(1)=\binom{n+m-1}{m}\) is strictly
increasing in \(m\) for \(n\ge2\), so
\begin{equation}\label{eq:monotonicity-step}
  2\binom{n+s-2}{s-1}
  \le
  2\binom{n+2k-2}{2k-1},
\end{equation}
with equality if and only if \(s=2k\).  Chaining \eqref{eq:DGS-applied-with-s}
and \eqref{eq:monotonicity-step} reproduces \eqref{eq:RSDbound}.

Now assume that equality holds in \eqref{eq:RSDbound}.  Then both
\eqref{eq:DGS-applied-with-s} and \eqref{eq:monotonicity-step} are equalities.
Equality in \eqref{eq:monotonicity-step} forces \(s=2k\); together with
\(A(X)\subseteq A_k\) and \(|A_k|=2k\), this gives \(A(X)=A_k\), proving
\cref{eqcons:full-inner-products}.

With \(s=2k\), equality in \eqref{eq:DGS-applied-with-s} triggers the equality
statement of \cref{thm:dgs-equality}: \(X\) is an antipodal tight spherical
\((2s-1)=(4k-1)\)-design, which is \cref{eqcons:tight-design}, and
\[
  |X|\,F_X(u)=(1+u)\,\cC_{2k-1}^{(n)}(u).
\]
Substituting \(|X|=2\binom{n+2k-2}{2k-1}\) yields
\eqref{eq:lattice-annihilator}, which is \cref{eqcons:annihilator}.

Finally, \(F_X(u)\) is a polynomial of degree \(2k\) whose roots are precisely
the elements of \(A(X)\); in particular \(-1\) is one of them.  The right side
of \eqref{eq:lattice-annihilator} has \(-1\) as a simple root from the factor
\(1+u\), so the remaining roots---those of \(\cC_{2k-1}^{(n)}\)---coincide
with
\[
  A(X)\setminus\{-1\}
  =
  \left\{0,\pm\frac1k,\pm\frac2k,\ldots,\pm\frac{k-1}{k}\right\},
\]
which is \cref{eqcons:root-filter}.
\end{proof}

\begin{corollary}\label{cor:design-strength}
For \(n\ge2\), equality in \eqref{eq:RSDbound} at level \(k\) forces an
antipodal tight spherical \((4k-1)\)-design of size
\[
  2\binom{n+2k-2}{2k-1}.
\]
In particular, for \(k=2\) the forced design strength is \(7\), and for
\(k=3\) the forced design strength is \(11\).
\end{corollary}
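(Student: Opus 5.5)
This corollary is essentially a restatement of \cref{prop:equality-consequences}\cref{eqcons:tight-design}, so the plan is to read it off directly. Fix \(n\ge2\), an integral lattice \(\cL\subset\R^n\), and \(k\ge1\) with \(N_{=k}(\cL)=2\binom{n+2k-2}{2k-1}\), and put \(X=\Xk(\cL)\).

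First I will check that \(X\) is a legitimate finite subset of \(S^{n-1}\). The right-hand side of \eqref{eq:RSDbound} is positive, so \(\Sk(\cL)\neq\varnothing\) and \(X\) is defined. The shell \(\Sk(\cL)\) is a finite set, being the intersection of a discrete lattice with a compact sphere. The scaling \(y\mapsto y/\sqrt{k}\) is injective, so \(|X|=N_{=k}(\cL)=2\binom{n+2k-2}{2k-1}\). This settles the size claim.

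Next I will invoke \cref{prop:equality-consequences}\cref{eqcons:tight-design}, which says that \(X\) is an antipodal tight spherical \((4k-1)\)-design. As a consistency check, \(4k-1=2e+1\) with \(e=2k-1\). The odd case of \eqref{eq:DGS-lower} then gives a tight size of \(2\binom{n+2k-2}{2k-1}\), which matches \(|X|\) exactly.

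Finally, the particular cases are arithmetic: \(k=2\) gives strength \(4\cdot2-1=7\), and \(k=3\) gives strength \(11\).

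There is no real obstacle here. The only point needing care is that \cref{prop:equality-consequences} requires \(n\ge2\), so that \cref{thm:dgs-equality} and the strict monotonicity in \(m\) apply. That hypothesis is exactly the one assumed in the corollary.
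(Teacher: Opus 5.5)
Your proposal is correct and takes the same approach as the paper, whose proof simply cites \cref{eqcons:tight-design} of \cref{prop:equality-consequences}. Your added checks are accurate: \(X\) is nonempty and finite, \(|X|=N_{=k}(\cL)\), and the odd case of \eqref{eq:DGS-lower} with \(e=2k-1\) gives the matching tight size \(2\binom{n+2k-2}{2k-1}\).
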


\begin{proof}
This is \cref{eqcons:tight-design} of \cref{prop:equality-consequences}.
\end{proof}

\section{The Integrality Filter}\label{sec:integrality-filter}

The root condition \eqref{eq:root-filter-general} is already decisive for
small \(k\).  We need explicit formulas only for
\(\cC_1^{(n)}\), \(\cC_3^{(n)}\), and \(\cC_5^{(n)}\).

\begin{lemma}\label{lem:C135}
For \(n\ge2\),
\begin{align}
  \cC_1^{(n)}(u)
    &=nu, \label{eq:C1}\\
  \cC_3^{(n)}(u)
    &=\frac{n(n+2)}{6}\,u\bigl((n+4)u^2-3\bigr), \label{eq:C3}\\
  \cC_5^{(n)}(u)
    &=\frac{n(n+2)(n+4)}{120}\,
      u\bigl((n+6)(n+8)u^4-10(n+6)u^2+15\bigr). \label{eq:C5}
\end{align}
\end{lemma}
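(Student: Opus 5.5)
We compute the normalized Gegenbauer polynomials \(Q_i^{(n)}\) explicitly for \(i\le5\) and then sum them according to \eqref{eq:Cdef}. We identify \(Q_i^{(n)}\) as the unique polynomial of degree \(i\) that is orthogonal to all lower-degree polynomials with respect to the weight \((1-u^2)^{(n-3)/2}\) on \([-1,1]\). It has the parity of \(i\) and is normalized by \(Q_i^{(n)}(1)=\dim\Harm_i(\R^n)\). Rather than doing Gram--Schmidt directly, we will use the three-term recurrence in the DGS normalization,
\[
  \frac{i+1}{n+2i}\,Q_{i+1}^{(n)}(u)=u\,Q_i^{(n)}(u)-\frac{n+i-3}{n+2i-4}\,Q_{i-1}^{(n)}(u),
\]
with \(Q_0^{(n)}=1\) and \(Q_1^{(n)}=nu\). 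Before relying on it, we check that this recurrence reproduces \(Q_2^{(n)}(u)=\tfrac{n+2}{2}(nu^2-1)\). That value is forced independently: \(Q_2^{(n)}\) is even, it is orthogonal to \(1\) (using \(\int u^2\,d\sigma=1/n\)), and \(Q_2^{(n)}(1)=\tfrac{(n+2)(n-1)}{2}\). Iterating the recurrence then gives \(Q_3^{(n)},Q_4^{(n)},Q_5^{(n)}\).

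With these in hand, the sums are
\[
  \cC_1^{(n)}=Q_1^{(n)},\qquad
  \cC_3^{(n)}=Q_3^{(n)}+Q_1^{(n)},\qquad
  \cC_5^{(n)}=Q_5^{(n)}+Q_3^{(n)}+Q_1^{(n)},
\]
so \eqref{eq:C1} is immediate. For \eqref{eq:C3} and \eqref{eq:C5} we collect coefficients and factor.

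There is a more robust alternative we may use instead, or as a cross-check. It rests on the known characterization that \(\cC_m^{(n)}(u)\) is the reproducing kernel of the space of polynomials of degree \(\le m\) with parity \(m\), restricted to the sphere. Consequently \(\cC_m^{(n)}\), up to a constant, is the orthogonal polynomial of degree \(m\) for the shifted weight \((1-u)(1-u^2)^{(n-3)/2}\) after symmetrization, or equivalently it is proportional to a Jacobi polynomial \(P^{((n-1)/2,(n-3)/2)}\)-type in \(u\) restricted to odd part. For odd \(m=2e+1\), \(\cC_m^{(n)}\) is odd. We can then determine it by orthogonality of \(\cC_m^{(n)}(u)/u\) against lower even powers under the weight \(u^2(1-u^2)^{(n-3)/2}\). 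The moments needed are \(\int u^{2j}\,d\sigma=\tfrac{1\cdot3\cdots(2j-1)}{n(n+2)\cdots(n+2j-2)}\).

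As a check of this route for \(m=3\), write \(\cC_3^{(n)}(u)=c\,u(au^2-1)\). Orthogonality against \(u\) reads \(a\int u^4-\int u^2=0\), that is, \(a\cdot\tfrac{3}{n(n+2)}=\tfrac1n\). This gives \(a=(n+2)/3\), so \(\cC_3^{(n)}\) is proportional to \(u((n+2)u^2-3)\). That disagrees with \eqref{eq:C3}, which has \((n+4)u^2-3\), so the weight in this route must be chosen carefully. The correct orthogonality for the kernel \(\sum_{i\equiv m} Q_i^{(n)}\) is with respect to \((1+u)\)-adjusted or \(u^2\)-adjusted measures; the kernel property gives \(\int \cC_m^{(n)}(u)\,p(u)\,d\mu=p(1)\). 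Because this is the step most likely to go wrong, we will primarily use the recurrence computation and treat the kernel approach only as a sanity check.

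Finally, the overall constants can be confirmed by \eqref{eq:Cvalue}. For \eqref{eq:C3}, at \(u=1\) we get \(\tfrac{n(n+2)(n+1)}{6}=\binom{n+2}{3}\). For \eqref{eq:C5}, at \(u=1\) the quartic factor equals \((n+6)(n+8)-10(n+6)+15=n^2+4n+3=(n+1)(n+3)\), so the value is \(\binom{n+4}{5}\), as required.

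The main obstacle is bookkeeping in \(Q_4^{(n)}\) and \(Q_5^{(n)}\). Beyond the \(u=1\) check, we will also verify the \(n=2\) case, where \(Q_i^{(2)}(\cos\theta)=2\cos i\theta\). At \(n=2\), \eqref{eq:C3} becomes \(\tfrac{4}{3}u(6u^2-3)\). This matches \(2\cos3\theta+2\cos\theta=8u^3-4u\), since \(\tfrac43(6u^3-3u)=8u^3-4u\).
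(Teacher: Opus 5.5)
Your main route is correct and is essentially the paper's proof. The paper simply quotes \(Q_1^{(n)},Q_3^{(n)},Q_5^{(n)}\) from the DGS table and sums them, whereas you regenerate them from the DGS three-term recurrence, which is how DGS define the \(Q_i^{(n)}\); your separate derivation of \(Q_2^{(n)}\) also sidesteps the \(0/0\) coefficient at \(i=1\), \(n=2\), and the resulting sums do factor as stated.

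Your discarded cross-check failed only because of the weight. Applying your reproducing identity \(\int\cC_m^{(n)}p\,d\mu=p(1)\) to \(p(u)=(1-u^2)q(u)\) shows that \(\cC_m^{(n)}\) is orthogonal to all lower-degree polynomials for the weight \((1-u^2)^{(n-1)/2}\), i.e.\ \(\cC_m^{(n)}=\frac{n}{n+2m}Q_m^{(n+2)}\). With that weight your \(m=3\) computation gives \(a=(n+4)/3\) as required, whereas the \(u^2\)-weighted condition you wrote just recovers \(Q_3^{(n)}\).
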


\begin{proof}
In the DGS normalization \cite[p.~365]{DGS1977}, the first odd normalized
Gegenbauer polynomials are
\[
  Q_1^{(n)}(u)=nu,
\]
\[
  6Q_3^{(n)}(u)=n(n+4)\bigl((n+2)u^3-3u\bigr),
\]
and
\[
  120Q_5^{(n)}(u)
  =
  n(n+2)(n+8)
  \bigl((n+4)(n+6)u^5-10(n+4)u^3+15u\bigr).
\]
By definition,
\[
  \cC_1^{(n)}=Q_1^{(n)},\qquad
  \cC_3^{(n)}=Q_3^{(n)}+Q_1^{(n)},\qquad
  \cC_5^{(n)}=Q_5^{(n)}+Q_3^{(n)}+Q_1^{(n)}.
\]
Substitution and simplification give \eqref{eq:C1}, \eqref{eq:C3}, and
\eqref{eq:C5}.
\end{proof}

\begin{proposition}[The \(k=2\) and \(k=3\) filters]\label{prop:k2k3-filter}
Let \(n\ge2\) and let \(\cL\subset\R^n\) be integral.
\begin{enumerate}
\item\label{kfilter:k-two}
If equality holds in \eqref{eq:RSDbound} for \(k=2\), then \(n=8\).
\item\label{kfilter:k-three}
Equality never holds in \eqref{eq:RSDbound} for \(k=3\).
\end{enumerate}
\end{proposition}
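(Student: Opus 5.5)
Both parts will follow from the root filter \cref{eqcons:root-filter} of \cref{prop:equality-consequences} combined with the explicit formulas of \cref{lem:C135}. Assuming equality in \eqref{eq:RSDbound} at level \(k\), the roots of \(\cC_{2k-1}^{(n)}\) must be exactly \(\{0,\pm\frac1k,\ldots,\pm\frac{k-1}{k}\}\). Since \(\cC_{2k-1}^{(n)}\) is odd of degree \(2k-1\) with nonzero leading coefficient, this says that the nonzero roots in \(u^2\) of the reduced even factor are exactly \(\frac{j^2}{k^2}\) for \(1\le j\le k-1\). I will compare this with the explicit polynomials.

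For \(k=2\), I use \eqref{eq:C3}. The prefactor \(\frac{n(n+2)}{6}\) is nonzero for \(n\ge2\), so the roots are \(0\) and \(\pm\sqrt{3/(n+4)}\). The filter requires these to be \(0,\pm\frac12\). This gives \(3/(n+4)=1/4\), hence \(n=8\), which proves \cref{kfilter:k-two}. As a sanity check, this is consistent with the \(E_8\) root shell having inner products \(0,\pm\frac12,-1\).

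For \(k=3\), I use \eqref{eq:C5}. The roots other than \(0\) satisfy the quadratic in \(v=u^2\),
\[
  (n+6)(n+8)v^2-10(n+6)v+15=0,
\]
and the filter requires its two roots to be exactly \(v=\frac19\) and \(v=\frac49\). By Vieta, the sum condition gives \(\frac{10}{n+8}=\frac59\), so \(n=10\). The product condition gives \(\frac{15}{(n+6)(n+8)}=\frac{4}{81}\), i.e.\ \((n+6)(n+8)=\frac{1215}{4}\), which is not even an integer. Alternatively, at \(n=10\) the left side is \(16\cdot18=288\neq 303.75\). This is a contradiction, so \cref{kfilter:k-three} holds.

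The main point to handle carefully is that the filter asserts equality of root \emph{sets}, together with the degree count. The polynomial \(\cC_{2k-1}^{(n)}\) has degree exactly \(2k-1\), and the target set has exactly \(2k-1\) elements. By \eqref{eq:lattice-annihilator} the roots are simple, since \(F_X\) has distinct roots. Hence matching the roots is equivalent to matching the polynomial up to scalar, which justifies using Vieta on the quadratic in \(v\). Beyond this, the computations are routine, so I expect no real obstacle.
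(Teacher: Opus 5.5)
Your proposal is correct and matches the paper's proof. Both parts come from the root filter of \cref{prop:equality-consequences} applied to \eqref{eq:C3} and \eqref{eq:C5}: $3/(n+4)=1/4$ gives $n=8$, and Vieta on the quadratic in $u^2$ gives $n=10$ from the sum condition and a contradiction from the product condition. Your observation that $(n+6)(n+8)=1215/4$ is not an integer is just a slightly quicker way of stating the paper's comparison $5/96\neq 4/81$.
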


\begin{proof}
Suppose \(k=2\).  By \cref{eqcons:root-filter} of
\cref{prop:equality-consequences}, the roots of \(\cC_3^{(n)}\) must be
\[
  \left\{-\frac12,0,\frac12\right\}.
\]
By \eqref{eq:C3}, the nonzero roots of \(\cC_3^{(n)}\) are
\[
  \pm \sqrt{\frac{3}{n+4}}.
\]
Therefore
\[
  \sqrt{\frac{3}{n+4}}=\frac12,
\]
so \(n+4=12\), and hence \(n=8\).  This proves \cref{kfilter:k-two}.

Now suppose \(k=3\).  By \cref{eqcons:root-filter} of
\cref{prop:equality-consequences}, the roots of \(\cC_5^{(n)}\) must be
\[
  \left\{-\frac23,-\frac13,0,\frac13,\frac23\right\}.
\]
Thus the two nonzero squared roots must be \(1/9\) and \(4/9\).  By
\eqref{eq:C5}, those squared roots are the two roots of
\[
  (n+6)(n+8)Y^2-10(n+6)Y+15=0.
\]
The sum-of-roots condition gives
\[
  \frac{10(n+6)}{(n+6)(n+8)}
  =
  \frac{10}{n+8}
  =
  \frac19+\frac49
  =
  \frac59.
\]
Hence \(n=10\).  But at \(n=10\), the product of the roots is
\[
  \frac{15}{(n+6)(n+8)}
  =
  \frac{15}{16\cdot18}
  =
  \frac{5}{96},
\]
whereas the required product is
\[
  \frac19\cdot\frac49=\frac4{81}.
\]
Since \(5/96\ne4/81\), no \(n\) satisfies both conditions.  Thus equality is
impossible for \(k=3\).
\end{proof}

We also need an elementary exclusion in dimension \(2\).

\begin{lemma}\label{lem:circle}
Let \(n=2\) and \(k\ge2\).  No full-rank integral lattice
\(\cL\subset\R^2\) attains equality in \eqref{eq:RSDbound}.
\end{lemma}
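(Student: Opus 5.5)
Suppose \(n=2\), \(k\ge2\), and equality holds in \eqref{eq:RSDbound}. Then \(N_{=k}(\cL)=2\binom{2k}{2k-1}=4k\). By \cref{prop:equality-consequences}, \(X=\Xk(\cL)\) is an antipodal tight spherical \((4k-1)\)-design on \(S^1\) with \(4k\) points. I would first establish the standard fact that a tight \(t\)-design on the circle with \(t+1\) points is a regular \((t+1)\)-gon. This can be seen directly: the condition \(\sum_{x\in X}e^{ij\theta_x}=0\) for \(1\le j\le 4k-1\), together with \(|X|=4k\), says the discrete Fourier data of the \(4k\) points vanish. Equivalently, the polynomial \(\prod_x(z-e^{i\theta_x})\) has vanishing power sums \(p_1,\dots,p_{4k-1}\). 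By Newton's identities it is therefore \(z^{4k}-c\) with \(|c|=1\), so the points form a regular \(4k\)-gon. Alternatively, \cref{eqcons:full-inner-products} shows the inner products are exactly \(\{-1\}\cup\{j/k\}\); I would use the design route, which seems cleanest.

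Hence \(\Sk(\cL)\) consists of \(4k\) vectors of length \(\sqrt k\) at consecutive angles \(2\pi/(4k)=\pi/(2k)\). Integrality then forces \(k\cos(\pi/(2k))=\ip{y}{z}\in\Z\) for adjacent shell vectors \(y,z\). Since \(k\ge2\), the angle \(\pi/(2k)\) lies in \((0,\pi/4]\), so \(\cos(\pi/(2k))\in[\tfrac{\sqrt2}{2},1)\). I would then finish with a rationality argument. By Niven's theorem, \(\cos(\pi/(2k))\) rational forces \(\cos(\pi/(2k))\in\{0,\pm\tfrac12,\pm1\}\), and none of these lies in \([\sqrt2/2,1)\), a contradiction. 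The cleaner observation here is that \(2\cos(\pi/(2k))\) is an algebraic integer, so if it is rational it is an integer in \([\sqrt2,2)\), which is impossible.

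The alternative, avoiding Niven, is to compare with \cref{eqcons:full-inner-products}. The largest non-unit inner product must equal \((k-1)/k\), while the regular \(4k\)-gon gives \(\cos(\pi/(2k))\). For \(k=2\) this reads \(\cos(\pi/4)=\sqrt2/2\ne 1/2\), and in general \(\cos(\pi/(2k))>1-\pi^2/(8k^2)>1-1/k\) for \(k\ge2\), again a contradiction.

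The main obstacle is justifying that the tight design on \(S^1\) is a regular polygon. I expect the Newton-identity argument to handle it. The one detail to verify is that the design condition for all polynomials of degree at most \(4k-1\) yields vanishing of \(\sum_x e^{ij\theta_x}\) for every \(1\le j\le 4k-1\). This holds because \(\operatorname{Re}\) and \(\operatorname{Im}\) of \((x_1+ix_2)^j\) are harmonic of degree \(j\).
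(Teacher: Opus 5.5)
Your proof is correct and follows essentially the same route as the paper. You both use the tight \((4k-1)\)-design with \(4k\) points on \(S^1\), apply Newton's identities to get a regular \(4k\)-gon, and reach a contradiction from the adjacent inner product \(k\cos(\pi/(2k))\). Your main finish uses the algebraic integer \(2\cos(\pi/(2k))\in[\sqrt2,2)\) (or Niven) instead of the paper's estimate \(\cos(\pi/(2k))>1-\pi^2/(8k^2)>(k-1)/k\); this is only a cosmetic difference, and you give the paper's estimate verbatim as your alternative.
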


\begin{proof}
Assume equality holds and put \(X=\Xk(\cL)\subset S^1\).  By
\cref{eqcons:tight-design} of \cref{prop:equality-consequences}, \(X\) is a
tight spherical \((4k-1)\)-design with
\[
  |X|=2\binom{2+2k-2}{2k-1}=4k.
\]
We use the elementary classification of tight designs on the circle.  Identify
\(S^1\) with the complex numbers of absolute value \(1\), and write
\[
  X=\{z_1,\ldots,z_N\},
  \qquad N=4k.
\]
A spherical \(t\)-design on \(S^1\) satisfies
\[
  \sum_{i=1}^N z_i^m=0
  \qquad
  (1\le |m|\le t).
\]
Here \(t=4k-1=N-1\), so
\[
  \sum_{i=1}^N z_i^m=0
  \qquad
  (1\le m\le N-1).
\]
Newton's identities imply that the elementary symmetric functions
\(e_1,\ldots,e_{N-1}\) vanish.  Hence
\[
  \prod_{i=1}^N(z-z_i)=z^N-c
\]
for some \(c\in\mathbb C\).  Since all \(z_i\) lie on the unit circle,
\(|c|=1\), and \(X\) is the vertex set of a regular \(N\)-gon.

Adjacent vertices of this regular \(4k\)-gon have inner product
\begin{equation}\label{eq:adjacent-vertices}
  \cos\left(\frac{2\pi}{4k}\right)
  =
  \cos\left(\frac{\pi}{2k}\right).
\end{equation}
For \(k\ge2\),
\[
  \cos\left(\frac{\pi}{2k}\right)
  >
  1-\frac12\left(\frac{\pi}{2k}\right)^2
  =
  1-\frac{\pi^2}{8k^2}
  >
  1-\frac1k
  =
  \frac{k-1}{k},
\]
where the last inequality is equivalent to \(k>\pi^2/8\), hence holds for
all \(k\ge2\).  Therefore \eqref{eq:adjacent-vertices} lies strictly between
\((k-1)/k\) and \(1\).  But the nontrivial inner products in a normalized
norm-\(k\) lattice shell must lie in
\[
  \left\{0,\pm\frac1k,\pm\frac2k,\ldots,\pm\frac{k-1}{k}\right\};
\]
this contradiction proves the lemma.
\end{proof}

\begin{proposition}\label{prop:dimension-bound}
Let \(n\ge2\), \(k\ge2\), and let \(\cL\subset\R^n\) be a full-rank integral
lattice.  If equality holds in \eqref{eq:RSDbound}, then \(k=2\) and \(n=8\).
\end{proposition}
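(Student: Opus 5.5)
We split according to the value of \(k\) and the dimension \(n\), combining \cref{cor:design-strength}, \cref{prop:k2k3-filter}, \cref{lem:circle}, and \cref{thm:BD}. Throughout, assume \(n\ge2\), \(k\ge2\), and that equality holds in \eqref{eq:RSDbound}.

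First, if \(n=2\), then \cref{lem:circle} already shows that equality is impossible for every \(k\ge2\). So from now on we may assume \(n\ge3\).

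Next, if \(k=2\), then \cref{kfilter:k-two} of \cref{prop:k2k3-filter} gives \(n=8\), which is the asserted conclusion. If \(k=3\), then \cref{kfilter:k-three} of \cref{prop:k2k3-filter} shows that equality never holds, so this case is excluded.

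It remains to treat \(k\ge4\) with \(n\ge3\). By \cref{cor:design-strength}, the normalized shell \(\Xk(\cL)\subset S^{n-1}\) is a tight spherical \(t\)-design with \(t=4k-1\ge15\). Since \(n\ge3\) and \(t\ge4\), \cref{thm:BD} forces \(t\in\{4,5,7,11\}\). But \(t\ge15\), which is a contradiction, so no equality occurs for \(k\ge4\).

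Every case except \(k=2\), \(n=8\) has now been excluded, which proves the proposition. There is no real obstacle here, since the substantive work sits in the cited results. The only points to check are bookkeeping:
\begin{itemize}
\item The case \(n=2\) must be removed before invoking \cref{thm:BD}, because that theorem requires \(n\ge3\).
\item The strength \(4k-1\) for \(k\ge4\) must avoid the exceptional set \(\{4,5,7,11\}\). It does, since \(4k-1\ge15\); in fact the only exceptional odd strengths, \(7\) and \(11\), correspond exactly to \(k=2\) and \(k=3\), which were handled by the integrality filter.
\end{itemize}
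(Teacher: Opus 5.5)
Your proof is correct and follows the paper's argument essentially step for step. You remove \(n=2\) via \cref{lem:circle}; for \(n\ge3\) you exclude \(k\ge4\) by \cref{thm:BD} (strength \(4k-1\ge15\)) and settle \(k=2,3\) via \cref{prop:k2k3-filter}. One small slip in your closing remark: \(5\) is also an odd exceptional strength, but it is not of the form \(4k-1\), so the argument is unaffected.
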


\begin{proof}
For \(n=2\), \cref{lem:circle} rules out equality for all \(k\ge2\).

Now suppose \(n\ge3\).  By \cref{cor:design-strength}, equality gives a tight
spherical \((4k-1)\)-design in \(S^{n-1}\).  If \(k\ge4\), then
\(4k-1\ge15\), contradicting \cref{thm:BD}.  The case \(k=3\) is ruled out by
\cref{kfilter:k-three} of \cref{prop:k2k3-filter}.  Hence \(k=2\), and
\cref{kfilter:k-two} of \cref{prop:k2k3-filter} then gives \(n=8\).
\end{proof}

\section{The Main Classification}\label{sec:classification-proof}

We now prove \cref{thm:main}.

First consider \(n=1\).  A full-rank one-dimensional integral lattice has the
form
\[
  \cL=a\Z
\]
with \(a>0\) and \(a^2\in\Z\).  Its squared norms are \(a^2m^2\), with
\(m\in\Z\).  Therefore
\[
  N_{=k}(\cL)=
  \begin{cases}
    2 & k=a^2m^2 \text{ for some }m\in\Z,\\
    0 & \text{otherwise}.
  \end{cases}
\]
The right-hand side of \eqref{eq:RSDbound} is
\[
  2\binom{1+2k-2}{2k-1}=2.
\]
Thus equality holds exactly when \(\cL\) represents \(k\).  This proves
\cref{main:rank-one}.

Assume henceforth that \(n\ge2\).

For \(k=1\), equality means
\[
  N_{=1}(\cL)=2n.
\]
If \(y,z\in\cL\) have squared norm \(1\), then integrality and
Cauchy--Schwarz give
\[
  \ip{y}{z}\in\{-1,0,1\}.
\]
Thus, up to sign, distinct norm-\(1\) vectors are mutually orthogonal.
Equality gives \(n\) orthonormal vectors
\[
  e_1,\ldots,e_n\in\cL.
\]
Hence, we have
\[
  \Z e_1+\cdots+\Z e_n\subseteq\cL.
\]
Conversely, if \(v\in\cL\), then
\[
  \ip{v}{e_i}\in\Z
  \qquad
  (1\le i\le n),
\]
so the coordinates of \(v\) in the orthonormal basis \(e_1,\ldots,e_n\) are
all integers.  Therefore
\[
  v\in \Z e_1+\cdots+\Z e_n,
\]
and hence
\[
  \cL=\Z e_1+\cdots+\Z e_n\cong\Z^n.
\]
This proves the necessity in \cref{main:k-one}.  Conversely, \(\Z^n\) plainly
attains equality for \(k=1\).  Thus \cref{main:k-one} is proved.

Now assume \(k\ge2\).  By \cref{prop:dimension-bound}, equality is possible
only when \(k=2\) and \(n=8\).  Let
\[
  \Phi:=\Sk(\cL).
\]
Then
\[
  |\Phi|=2\binom{8+2}{3}=240.
\]
Moreover, for \(\alpha,\beta\in\Phi\),
\[
  \ip{\alpha}{\beta}\in\{-2,-1,0,1,2\},
\]
with \(\ip{\alpha}{\beta}=2\) only when \(\alpha=\beta\), and
\(\ip{\alpha}{\beta}=-2\) only when \(\alpha=-\beta\).

We claim that \(\Phi\) is a reduced crystallographic root system.  It is
finite and closed under negation.  Its normalized image is a spherical
\(2\)-design by \cref{eqcons:tight-design} of
\cref{prop:equality-consequences}, so it spans \(\R^8\): if a nonzero linear
form vanished on \(\Phi\), then the square of that linear form would
contradict the second-moment identity for a \(2\)-design.

For \(\alpha,\beta\in\Phi\), the reflection in the hyperplane perpendicular to
\(\alpha\) is
\[
  s_\alpha(\beta)
  =
  \beta-\frac{2\ip{\beta}{\alpha}}{\ip{\alpha}{\alpha}}\alpha
  =
  \beta-\ip{\beta}{\alpha}\alpha,
\]
because \(\norm{\alpha}^2=2\).  If \(\ip{\beta}{\alpha}=0\), then
\(s_\alpha(\beta)=\beta\).  If \(\ip{\beta}{\alpha}=1\), then
\[
  \norm{\beta-\alpha}^2=2+2-2=2,
\]
and \(\beta-\alpha\in\cL\), so \(\beta-\alpha\in\Phi\).  If
\(\ip{\beta}{\alpha}=-1\), the same argument gives
\(\beta+\alpha\in\Phi\).  The cases \(\ip{\beta}{\alpha}=\pm2\) are the
trivial cases \(\beta=\pm\alpha\).  Thus \(\Phi\) is closed under the root
reflections.

Furthermore,
\[
  \frac{2\ip{\alpha}{\beta}}{\ip{\alpha}{\alpha}}
  =
  \ip{\alpha}{\beta}\in\Z,
\]
so \(\Phi\) is crystallographic.  Since all roots in \(\Phi\) have the same
squared norm \(2\), the root system is simply laced; and since the only scalar
multiples of a root in \(\Phi\) are \(\pm\alpha\), it is reduced.

The irreducible simply-laced crystallographic root systems have types
\(A_r\), \(D_r\), and \(E_6,E_7,E_8\), with root counts
\begin{gather*}
  |A_r|=r(r+1),\qquad |D_r|=2r(r-1),
  \\
  |E_6|=72,\qquad |E_7|=126,\qquad |E_8|=240.
\end{gather*}
A reducible root system has root count equal to the sum of the root counts of
its irreducible factors.  Among simply-laced crystallographic root systems of
total rank \(8\), the number of roots is maximized uniquely by \(E_8\).
Since \(|\Phi|=240\), it follows that \(\Phi\) has type \(E_8\).

Let
\[
  M=\langle\Phi\rangle_{\Z}.
\]
Then \(M\subseteq\cL\), and \(M\) is the \(E_8\) root lattice.  Since \(E_8\)
is unimodular \cite[Ch.~4]{CS1999}, we have \(M^*=M\).  Because \(\cL\) is
integral and contains \(M\), every \(v\in\cL\) pairs integrally with every
element of \(M\).  Hence
\[
  v\in M^*=M.
\]
Thus \(\cL\subseteq M\), and therefore
\[
  \cL=M\cong E_8;
\]
this proves the necessity in \cref{main:e-eight}.  Conversely, \(E_8\) attains
equality for \(k=2\), as we check in \cref{sec:verification}.  This proves
\cref{main:e-eight} and completes the proof of \cref{thm:main}.

\begin{corollary}[Shell-generated form]\label{cor:shell-generated}
Let \(\cL\subset\R^n\) be an integral lattice with
\(\Sk(\cL)\neq\varnothing\), let
\[
  M=\langle\Sk(\cL)\rangle_{\Z},
\]
and put \(r=\rank M\).  Suppose that the shell saturates the RSD bound in its
own linear span, i.e.,
\[
  |\Sk(\cL)|=2\binom{r+2k-2}{2k-1}.
\]
Then one of the following holds:
\begin{enumerate}
\item \(r=1\) and \(M\cong\sqrt{k}\Z\);
\item \(r\ge2\), \(k=1\), and \(M\cong\Z^r\);
\item \(r=8\), \(k=2\), and \(M\cong E_8\).
\end{enumerate}
Moreover, if \(\cL\) itself attains equality in \eqref{eq:RSDbound} in
dimension \(n\), then \(r=n\).
\end{corollary}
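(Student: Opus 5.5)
The plan is to reduce to \cref{thm:main} by working inside the real span of the shell. Let \(V=\R M\), so \(\dim V=r\), and choose an isometry \(V\cong\R^r\). The shell \(\Sk(\cL)\) lies in \(M\subseteq V\), and every element of \(M\) of squared norm \(k\) lies in \(\cL\). Hence
\[
  \Sk(M)=\Sk(\cL),
\]
so \(M\) is a full-rank integral lattice in \(V\cong\R^r\) with \(N_{=k}(M)=2\binom{r+2k-2}{2k-1}\). This is exactly equality in \eqref{eq:RSDbound} for \(M\) in dimension \(r\), and \cref{thm:main} applies to \(M\).

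I would then read off the three cases of \cref{thm:main}.
\begin{enumerate}
\item If \(r=1\), then \(M=a\Z\) is generated by its norm-\(k\) vectors. These are \(\pm am\) with \(a^2m^2=k\), and they generate \(M\) only if \(|m|=1\). So \(a=\sqrt{k}\) and \(M\cong\sqrt k\Z\).
\item If \(r\ge2\) and \(k=1\), \cref{main:k-one} gives \(M\cong\Z^r\).
\item If \(r\ge2\) and \(k\ge2\), \cref{main:e-eight} gives \(r=8\), \(k=2\) and \(M\cong E_8\).
\end{enumerate}
These are precisely the three alternatives claimed.

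For the final assertion, suppose \(\cL\) attains equality in \eqref{eq:RSDbound} in dimension \(n\). If \(n=1\), the shell is nonempty, so \(r=1=n\). If \(n\ge2\), \cref{eqcons:tight-design} of \cref{prop:equality-consequences} makes \(\Xk(\cL)\) a spherical \(2\)-design in \(S^{n-1}\). As in the proof of \cref{thm:main}, a \(2\)-design spans \(\R^n\): a nonzero linear form \(\ell\) vanishing on the shell would give \(\sum_x\ell(x)^2=0\), while the second-moment identity forces \(\sum_x\ell(x)^2=|X|\,\norm{\ell}^2/n>0\). Hence \(r=n\).

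An alternative route is to note that \(m\mapsto 2\binom{m+2k-2}{2k-1}\) is strictly increasing in \(m\). Equality for \(\cL\) in dimension \(n\), together with the bound \(|\Sk(\cL)|\le 2\binom{r+2k-2}{2k-1}\) applied to \(M\), then forces \(r\ge n\), hence \(r=n\). I prefer this route, since it avoids designs entirely, except that strict monotonicity fails when \(k=1\) and \(r=0\), which cannot occur because the shell is nonempty.

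I expect no step to be a real obstacle. The only point needing care is the identification \(\Sk(M)=\Sk(\cL)\), together with the observation that \(M\), although possibly not full rank in \(\R^n\), is full rank in its own span. Once that is in place, \cref{thm:main} applies verbatim.
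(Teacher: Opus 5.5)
Your proof is correct and matches the paper's. You identify \(\Sk(M)=\Sk(\cL)\), apply \cref{thm:main} to \(M\) in its \(r\)-dimensional span, and obtain \(r=n\) from strict monotonicity of \(d\mapsto\binom{d+2k-2}{2k-1}\); your check that shell-generation forces \(a=\sqrt k\) in rank \(1\) fills in a detail the paper leaves implicit, and the \(2\)-design spanning argument is a valid alternative. One small correction: that map is strictly increasing for all \(d\ge0\) (for \(k=1\) it is just \(d\mapsto d\)), so your caveat about \(k=1\), \(r=0\) is unnecessary, though harmless.
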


\begin{proof}
Since \(M\) is generated by \(\Sk(\cL)\), we have
\[
  \Sk(M)=\Sk(\cL).
\]
Viewing \(M\) as a full-rank integral lattice in its real span, the hypothesis
says exactly that \(M\) attains equality in the RSD bound in dimension \(r\).
Applying \cref{thm:main} to \(M\) gives the three listed possibilities.

Finally, suppose that \(\cL\) itself attains equality in dimension \(n\).
Then
\[
  2\binom{n+2k-2}{2k-1}
  =
  |\Sk(\cL)|
  =
  |\Sk(M)|
  \le
  2\binom{r+2k-2}{2k-1}.
\]
Since \(r\le n\) and the function
\[
  d\longmapsto \binom{d+2k-2}{2k-1}
\]
is strictly increasing for \(d\ge1\), we get \(r=n\).
\end{proof}

\section{Equality Cases and Comparison Examples}\label{sec:verification}

\subsection{The rank-\(1\) family}

Let \(\cL=a\Z\subset\R\) with \(a^2\in\Z_{>0}\).  If
\[
  k=a^2m^2
\]
for some \(m\in\Z_{>0}\), then
\[
  \Sk(\cL)=\{\pm am\},
\]
so we have
\[
  N_{=k}(\cL)=2;
\]
since
\[
  2\binom{1+2k-2}{2k-1}=2,
\]
equality holds.  If no such \(m\) exists, then \(N_{=k}(\cL)=0\), so equality
fails.

After replacing \(\cL\) by the sublattice generated by its norm-\(k\) shell,
the equality case becomes
\[
  \sqrt{k}\Z.
\]

\subsection{The case \((\Z^n,1)\)}

For the standard lattice \(\Z^n\),
\[
  \Sk(\Z^n)=\{\pm e_1,\ldots,\pm e_n\},
\]
so we have
\[
  N_{=1}(\Z^n)=2n.
\]
The bound gives
\[
  2\binom{n+2-2}{1}=2n,
\]
so \((\Z^n,1)\) attains equality.

\subsection{The case \((E_8,2)\)}

The \(E_8\) root lattice is even unimodular of rank \(8\), and its roots have
squared norm \(2\).  The \(E_8\) root system has \(240\) roots.  Therefore
\[
  N_{=2}(E_8)=240.
\]
The right-hand side of \eqref{eq:RSDbound} for \(n=8\), \(k=2\), is
\[
  2\binom{8+4-2}{3}
  =
  2\binom{10}{3}
  =
  240.
\]
Thus \((E_8,2)\) attains equality.

The normalized roots of \(E_8\) form the tight spherical \(7\)-design in
\(S^7\) \cite[Example~8.4]{DGS1977} (see also \cite{Venkov2001}); their
nontrivial inner products are
\[
  -1,\quad -\frac12,\quad 0,\quad \frac12,
\]
which agrees with the \(k=2\) integrality filter:
\[
  \Roots\bigl(\cC_3^{(8)}\bigr)
  =
  \left\{-\frac12,0,\frac12\right\}.
\]

\subsection{The \(k=3\) circle comparison}

For \(n=2\) and \(k=3\), the bound is
\[
  2\binom{2+6-2}{5}
  =
  2\binom{6}{5}
  =
  12.
\]
Equality would force a regular \(12\)-gon on the circle by the argument in
\cref{lem:circle}; its adjacent inner product is
\[
  \cos\left(\frac{\pi}{6}\right)=\frac{\sqrt3}{2},
\]
which is not in \((1/3)\Z\).  Thus equality is impossible in dimension \(2\).

In all dimensions \(n\ge2\), \cref{kfilter:k-three} of
\cref{prop:k2k3-filter} rules out equality at \(k=3\) by the DGS annihilator
roots.

\subsection{The Leech lattice comparison}

The Leech lattice \(\Lambda_{24}\) is even unimodular of rank \(24\), has
minimum squared norm \(4\), and has
\[
  N_{=4}(\Lambda_{24})=196{,}560
\]
minimal vectors \cite{Leech1967}.  The bound \eqref{eq:RSDbound} for
\(n=24\), \(k=4\), is
\[
  2\binom{24+8-2}{7}
  =
  2\binom{30}{7}
  =
  4{,}071{,}600.
\]
Thus the Leech lattice is far from equality in \eqref{eq:RSDbound} at
\(k=4\).

The normalized minimal vectors of the Leech lattice form a tight spherical
\(11\)-design in \(S^{23}\).  This does not contradict the equality
classification above: equality in \eqref{eq:RSDbound} at level \(k=4\) would
force the stronger condition of a tight spherical \(15\)-design.  Indeed, the
normalized Leech minimal-vector inner products are
\[
  -1,\quad -\frac12,\quad -\frac14,\quad 0,\quad \frac14,\quad \frac12,
\]
which correspond to the roots prescribed by \(\cC_5^{(24)}\), together with
the antipodal value \(-1\).  Equality in \eqref{eq:RSDbound} at \(k=4\) would
instead require the roots of \(\cC_7^{(24)}\) to be
\[
  0,\quad \pm\frac14,\quad \pm\frac12,\quad \pm\frac34,
\]
and would force a tight spherical \(15\)-design, which is ruled out in
dimension at least \(3\) by \cref{thm:BD}.

\section{Concluding Remarks}\label{sec:remarks}

\subsection{The equality list}

In shell-generated form, the complete equality list is
\[
  (\sqrt{k}\Z,k)\quad (k\ge1),
  \qquad
  (\Z^n,1)\quad (n\ge2),
  \qquad
  (E_8,2).
\]
The rank-\(1\) family appears because the DGS shell bound equals \(2\) in
dimension \(1\) for every \(k\), and any one-dimensional integral lattice
representing \(k\) saturates it.

\subsection{Why equality is more rigid than the bound}

The upper bound \eqref{eq:RSDbound} uses only the coarse list of inner
products allowed by integrality.  It does not require all those inner products
to occur, and it does not use their multiplicities.  Equality, however, forces
all arithmetically allowed inner products to occur and imposes the DGS
annihilator identity.

Thus the equality problem is much more rigid than the inequality
itself.  For \(k=2\), for example, the polynomial identity alone gives
\[
  \sqrt{\frac{3}{n+4}}=\frac12,
\]
and hence \(n=8\).  The lattice structure then reconstructs the \(E_8\) root
system.

\subsection{Relation to the reverse Minkowski program}

The theorem of Regev and Stephens-Davidowitz \cite{RSD2026} gives a
reverse Minkowski--type shell bound for integral lattices, in the broader
context of reverse Minkowski questions for stable lattices
\cite{RSD2024,Reg22}.  Our equality classification shows that the DGS shell
bound is almost never exactly sharp in rank at least \(2\); indeed, for every
\(k\ge3\), no full-rank integral lattice of dimension \(n\ge2\) attains it.

This leaves several quantitative questions.

First, one can ask for the true maximum of \(N_{=k}(\cL)\) among integral
lattices in fixed dimension \(n\), especially for fixed \(k\ge3\).  The DGS
bound \eqref{eq:RSDbound} has order \(n^{2k-1}\) for fixed \(k\), whereas the
standard cubic lattice has only \(\Theta(n^k)\) vectors of squared norm \(k\).
Thus the equality classification does not determine the true extremal
asymptotics; it only rules out exact saturation of the DGS bound.

Second, one can ask for stability.  If \(N_{=k}(\cL)\) is close to the DGS
bound, must the normalized shell be close to a tight design, or close to one
of the known sharp configurations?  The proof here uses exact DGS equality;
a robust version would require quantitative stability for the relevant
linear-programming bound.

Third, the Gaussian mass question remains compelling.  The conjecture asks
whether
\[
  \sum_{y\in \cL} e^{-\tau\norm{y}^2}
  \le
  \sum_{z\in \Z^n} e^{-\tau\norm{z}^2}
\]
holds for all stable lattices \(\cL\subset\R^n\) and all \(\tau>0\).  Known
partial results include the analogous Epstein zeta inequality of Eisenberg,
Regev, and Stephens-Davidowitz \cite{ERSD2022}, which gives
\[
  \zeta(\cL,s)\le \zeta(\Z^n,s)
  \qquad (s>n/2),
\]
with equality only for \(\cL\) isometric to \(\Z^n\).  Regev and
Stephens-Davidowitz \cite{RSD2024} also resolved the Gaussian mass question
in extreme parameter regimes.  The principal obstruction in the intermediate
regime is the existence of local maxima of Gaussian mass in high dimensions,
as established by Heimendahl, Marafioti, Thiemeyer, Vallentin, and
Zimmermann \cite{HMTVZ2023}.  The present equality classification operates at
the level of individual shells rather than generating functions, so the rarity
of exact DGS saturation neither implies nor refutes the Gaussian mass
conjecture.

\end{document}